\def\@biblabel#1{#1}
\newtheorem{theorem}{Theorem}[section]
\newtheorem{lemma}[theorem]{Lemma}
\theoremstyle{definition}
\newtheorem{definition}[theorem]{Definition}
\theoremstyle{proposition}
\newtheorem{proposition}[theorem]{Proposition}
\theoremstyle{remark}
\theoremstyle{example}
\newtheorem{example}[theorem]{Example}
\theoremstyle{corollary}
\date{}
\begin{document}

\title{Decomposition of the Symmetric Powers}

\author{Bin Li \\
\tiny{School of Mathematics and Statistics, Wuhan University, P.R. China} \\
\tiny{libin117@whu.edu.cn}}

\maketitle

\begin{abstract}
A decomposition of any symmetric power of $\Bbb C^2\otimes\Bbb C^2\otimes\Bbb C^2$ into irreducible  $sl_2(\Bbb C)\oplus sl_2(\Bbb C)\oplus sl_2(\Bbb C)$-submodules are presented. Namely, the multiplicities of irreducible summands in the symmetric power are determined.
\end{abstract}

\noindent\textbf{Keywords: Symmetric power, character, multiplicity}\\

\noindent MSC2010:  15A72, 17B10
\section{Introduction}

As is known in classical invariant theory, the determinant of a given $n\times n$ square matrix $A$ can be regarded as the simplest non-constant polynomial in the entries of $A$ which is invariant under the action of $SL_n(\Bbb C)\times SL_n(\Bbb C)$\cite{Pr}. As a generalization, Cayley introduced in \cite{Ca} the hyperdeterminant of a multidimensional array. For the simplest case, Cayley's hyperdeterminant, that is, the hyperdeterminant of a $2\times 2\times 2$ array, is known to be a homogeneous polynomial of degree 4 invariant under the action of $SL_2(\Bbb C)\times SL_2(\Bbb C)\times SL_2(\Bbb C)$\cite{GKZ}. Moreover, Gelfand, Kapranov and Zelevinsky pointed out in \cite{GKZ} that any invariant in the polynomial algebra, also called the symmetric algebra of $\Bbb C^2\otimes\Bbb C^2\otimes\Bbb C^2$, is a polynomial in Cayley's hyperdeterminant.

A new proof that Cayley's hyperdeterminant generates all the invariants was shown by Bremner, Bickis and Soltanifar \cite{BBS} in a combinatorial approach via representation theory of Lie algebras. In their proof, as a trivial module over $sl_2(\Bbb C)\oplus sl_2(\Bbb C)\oplus sl_2(\Bbb C)$, Cayley's hyperdeterminant was calculated in a linear algebraic method. An estimation for the multiplicity $x^{m}$ of $V(0)\otimes V(0)\otimes V(0)$ in the $m^{th}$-symmetric power $S^{m}(\Bbb C^2\otimes\Bbb C^2\otimes\Bbb C^2)$ was given in \cite{BBS} which compels $x^{m}$ to be 1 if $m$ is a multiple of 4, and 0 otherwise. Inspired by their work, we determine the multiplicity $x^{m}_{n_1,n_2,n_3}$ of $V(n_1)\otimes V(n_2)\otimes V(n_3)$ in the $m^{th}$-symmetric power for any $m, n_1, n_2, n_3\in\Bbb Z^{+}$ by studying the character of $S^{m}(\Bbb C^2\otimes\Bbb C^2\otimes\Bbb C^2)$. Since every irreducible $sl_2(\Bbb C)\oplus sl_2(\Bbb C)\oplus sl_2(\Bbb C)$-module is isomorphic to $V(n_1)\otimes V(n_2)\otimes V(n_3)$ for some $n_1, n_2, n_3\in\Bbb Z^{+}$ and the symmetric power is completely reducible, we decompose  $S^{m}(\Bbb C^2\otimes\Bbb C^2\otimes\Bbb C^2)$ into a direct sum of irreducible $sl_2(\Bbb C)\oplus sl_2(\Bbb C)\oplus sl_2(\Bbb C)$-submodules.

The paper is organized as follows: in section 2, we briefly recall some  basic definitions and in section 3, the multiplicities of the irreducibles in the symmetric power are determined by the dimensions of the weight spaces. In section 4, we present the formulas for the dimensions of the weight spaces in the symmetric power.

\section{Preliminaries}

We denote by $sl_2(\Bbb C)$ the simple Lie algebra consisting of all the $2\times 2$ matrices
\begin{gather*}
\begin{pmatrix} a & b \\ c & -a\end{pmatrix}
\end{gather*}
over the complex field $\Bbb C$. Clearly, $sl_2(\Bbb C)$ is 3-dimensional with a $\Bbb C$-basis
\begin{gather*}
E=\begin{pmatrix} 0 & 1 \\ 0 & 0\end{pmatrix},\quad H=\begin{pmatrix} 1 & 0 \\ 0 & -1\end{pmatrix}, \quad  F=\begin{pmatrix} 0 & 0 \\ 1 & 0\end{pmatrix}.
\end{gather*}
The Cartan subalgebra of $sl_2(\Bbb C)$ is spanned over $\Bbb C$ by $H$, denoted by $\mathfrak h$. Given a Lie algebra $\mathfrak g$ and its Cartan subalgebra $\mathfrak h$, we say a $\mathfrak g$-module $M$ has a weight space decomposition if $M=\bigoplus M_{\lambda}$, where $\lambda\in\mathfrak h^{\ast}$ and $M_{\lambda}=\{m\in M\ |\ h\cdot m= \lambda(h)m\ \textrm{for all}\ h\in\mathfrak h \}$, called weight space of weight $\lambda$.

 Let $\Bbb C^2$ be the set of all 2-dimensional complex column vectors which admits a natural $sl_2(\Bbb C)$-action through matrix multiplication. Take $\alpha, \omega\in\mathfrak h^{\ast}$ such that
 \[\alpha(H)=2,\ \omega(H)=1.\] Note that $\alpha$ and $\omega$ are the unique simple root and the unique fundamental weight of $sl_2(\Bbb C)$ respectively. It is easy to see $x_0=(1,0)^\textrm{T}$ and $x_1=(0,1)^\textrm{T}$ form a basis of $\Bbb C^2$ with weights $\omega$ and $-\omega$. It is well-known that every finite dimensional irreducible $sl_2$-module has a weight space decomposition and is determined uniquely by its highest weight. Moreover, for every non-negative integer $n$, there is an $(n+1)$-dimensional irreducible $sl_2(\Bbb C)$-module of highest weight $n\omega$, or simply $n$, denoted by $V(n)$. Actually $\Bbb C^2$ above is $V(1)$.

 For a given Lie algebra $\mathfrak g$ and a $\mathfrak g$-module $M$, we write $T(M)$ for the tensor algebra of $M$. Note that \[T(M)=\bigoplus_{m\geqslant 0 }T^{m}(M),\] where $T^{m}(M)=\underbrace{M\otimes \cdots \otimes M}_{m}$ is called the $m^{th}$-tensor power.
The multiplication in $T(M)$ is defined as
\[(x_1\otimes\cdots\otimes x_m)\cdot(y_1\otimes\cdot\otimes y_n)=x_1\otimes \cdots\otimes x_m\otimes y_1\otimes \cdots\otimes y_n.\]
We define the symmetric algebra of $M$ as the quotient of $T(M)$ by the ideal generated by
\[x\otimes y-y\otimes x\in T^2(M) =M\otimes M\]
for all $x, y\in M$, which we denote by $S(M)$. One has $S(M)=\bigoplus_{m\geqslant 0}S^{m}(M)$, where $S^{m}(M)$ is called the $m^{th}$-symmetric power.
Note that every tensor power admits a $\mathfrak g$-action as follows
\[g\cdot (x_1\otimes \cdots\otimes x_m)=\sum_{i=1}^{m}x_1\otimes\cdots\otimes (g\cdot x_i)\otimes \cdots \otimes x_m.\]
Since the ideal defined above is also a $\mathfrak g$-module, so is the symmetric power $S^{m}(M)$.

   From now on, we will concentrate on the Lie algebra $sl_2(\Bbb C)\oplus sl_2(\Bbb C)\oplus sl_2(\Bbb C)$, which is also denoted by $sl_2(\Bbb C)^{\oplus 3}$ for simplicity. Let $\alpha_i$ and $\omega_i$ be the simple root and the fundamental weight of the $i^{th}$-copy of $sl_2(\Bbb C)$ in $sl_2(\Bbb C)^{\oplus 3}$ and let $\{E_i, H_i, F_i\}$ be the basis of the copy as above. The Cartan subalgebra $\mathfrak h$ of $sl_2(\Bbb C)^{\oplus 3}$ is then defined as $\mathfrak h=\bigoplus_{i=1}^{3}\Bbb CH_i.$  A $sl_2(\Bbb C)^{\oplus 3}$-module $M$ which has a weight space decomposition can be written as
 \[M=\bigoplus M_{(\lambda_1,\lambda_2,\lambda_3)},\] where $(\lambda_1,\lambda_2,\lambda_3)\in\mathfrak h^{\ast}$ acts on $\mathfrak h$ as
 \[(\lambda_1,\lambda_2,\lambda_3)(a_1H_1+a_2H_1+a_3H_1)=\sum_{i=1}^3a_i\lambda_i(H_i), (a_i\in\Bbb C).\] We also write $(\lambda_1(H_1), \lambda_2(H_2), \lambda_3(H_3))\in\Bbb C^{3}$ for the weight.
 Note that  $\Bbb C^2\otimes\Bbb C^2\otimes\Bbb C^2$ can be viewed as a $sl_2(\Bbb C)^{\oplus 3}$-module by
 \[ (x, y, z)\cdot(v_1\otimes v_2\otimes v_3)=(x\cdot v_1)\otimes v_2\otimes v_3+v_1\otimes (y\cdot v_2)\otimes v_3+v_1\otimes v_2\otimes(z\cdot v_3),\] and $x_{i,j,k}\triangleq x_i\otimes x_j\otimes x_k$ ($i, j, k\in\{0,1\}$) spans the 1-dimensional weight space $\Bbb C^2\otimes\Bbb C^2\otimes\Bbb C^2_{(1-2i,1-2j,1-2k)}$.

\section{Multiplicities}

Since $\Bbb C^2\otimes\Bbb C^2\otimes\Bbb C^2$ is 8-dimensional, the $m^{th}$-symmetric power of $\Bbb C^2\otimes\Bbb C^2\otimes\Bbb C^2$ is finite dimensional for every $m\in\Bbb Z^{+}$. Indeed, all the ordered monomials of degree $m$ in 8 variables $x_{i,j,k}$ ($i, j, k\in\{0,1\}$) form a basis for $S^m(\Bbb C^2\otimes\Bbb C^2\otimes\Bbb C^2)$ and obviously there are only finitely many such monomials. Since known as Weyl's theorem\cite{Hu}, every finite dimensional modules over a complex semisimple Lie algebra is completely reducible, especially the $m^{th}$-symmetric power $S^m(\Bbb C^2\otimes\Bbb C^2\otimes\Bbb C^2)$ decomposes into a direct sum of irreducible $sl_2(\Bbb C)^{\oplus 3}$-submodules.
\begin{lemma}
Every finite dimensional irreducible $sl_2(\Bbb C)^{\oplus 3}$-module is isomorphic to a tensor product $V(n_1)\otimes V(n_2)\otimes V(n_3)$ for some $n_i\in\Bbb Z^+$, $i=1, 2, 3$.
\end{lemma}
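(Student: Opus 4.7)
The plan is to combine highest weight theory for $sl_2(\Bbb C)$ with the observation that the three copies of $sl_2(\Bbb C)$ inside $sl_2(\Bbb C)^{\oplus 3}$ commute pairwise. Let $M$ be a finite dimensional irreducible $sl_2(\Bbb C)^{\oplus 3}$-module. Since $M$ is in particular finite dimensional over each factor, Weyl's theorem applied to each copy separately guarantees that $\mathfrak h$ acts semisimply, giving a simultaneous weight space decomposition $M=\bigoplus M_{(\lambda_1,\lambda_2,\lambda_3)}$. First I would order weights lexicographically by the coordinates $(\lambda_1(H_1),\lambda_2(H_2),\lambda_3(H_3))$, pick a nonzero $v$ in the maximal weight space $M_{(n_1,n_2,n_3)}$, and conclude $E_iv=0$ for $i=1,2,3$, since each $E_iv$ sits in a strictly greater weight space. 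Viewing $v$ as a highest weight vector for the $i$-th copy of $sl_2(\Bbb C)$ acting on the finite dimensional submodule $U(sl_2^{(i)})\cdot v$, the standard $sl_2$-classification forces $n_1,n_2,n_3\in\Bbb Z^+$ and additionally $F_i^{n_i+1}v=0$.

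Next I would construct a map $\varphi:V(n_1)\otimes V(n_2)\otimes V(n_3)\to M$ sending the highest weight vector $v_{n_1}\otimes v_{n_2}\otimes v_{n_3}$ to $v$ and extending $sl_2(\Bbb C)^{\oplus 3}$-equivariantly. Well-definedness follows from the fact that $V(n_1)\otimes V(n_2)\otimes V(n_3)$ is a cyclic module over $U(sl_2(\Bbb C)^{\oplus 3})$ generated by $v_{n_1}\otimes v_{n_2}\otimes v_{n_3}$ with defining relations $E_i(v_{n_1}\otimes v_{n_2}\otimes v_{n_3})=0$, $H_i(v_{n_1}\otimes v_{n_2}\otimes v_{n_3})=n_i(v_{n_1}\otimes v_{n_2}\otimes v_{n_3})$, and $F_i^{n_i+1}(v_{n_1}\otimes v_{n_2}\otimes v_{n_3})=0$, all of which are satisfied by $v$ in $M$ by the previous step. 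The image of $\varphi$ is a nonzero $sl_2(\Bbb C)^{\oplus 3}$-submodule of $M$, so by irreducibility of $M$ it equals $M$.

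Finally, for injectivity I would show that $V(n_1)\otimes V(n_2)\otimes V(n_3)$ is itself irreducible over $sl_2(\Bbb C)^{\oplus 3}$. This follows from the same maximal weight argument: the only weight of the tensor product that is maximal in lexicographic order is $(n_1,n_2,n_3)$ and its weight space is spanned by $v_{n_1}\otimes v_{n_2}\otimes v_{n_3}$, so any nonzero submodule contains this vector; applying monomials $F_1^{i_1}F_2^{i_2}F_3^{i_3}$ to it then produces a full weight basis of the tensor product. The main obstacle is the well-definedness of $\varphi$, which is really the assertion that the tensor product admits the cyclic presentation described above. The cleanest justification is via the PBW decomposition $U(sl_2(\Bbb C)^{\oplus 3})\cong U(sl_2(\Bbb C))^{\otimes 3}$ as algebras, realising $V(n_1)\otimes V(n_2)\otimes V(n_3)$ as the quotient of the triple Verma module $M(n_1)\otimes M(n_2)\otimes M(n_3)$ by the three relations $F_i^{n_i+1}(v_{n_1}\otimes v_{n_2}\otimes v_{n_3})=0$, and then checking that the tautological map from this triple Verma to $M$ sending generator to $v$ descends.
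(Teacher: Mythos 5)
The paper does not actually prove this lemma --- it is stated without proof as a standard fact --- so there is nothing internal to compare your argument against. Your route (lexicographically maximal weight vector, the relations $E_iv=0$, $H_iv=n_iv$, $F_i^{n_i+1}v=0$, and a presentation of $V(n_1)\otimes V(n_2)\otimes V(n_3)$ as a quotient of the triple Verma module) is a legitimate highest-weight-theory proof and, modulo the point below, it is correct. Note that the more common textbook proof of the general statement ``irreducibles of $\mathfrak g_1\oplus\mathfrak g_2$ are external tensor products of irreducibles'' goes instead through the Jacobson density theorem, or through the canonical map $V\otimes \mathrm{Hom}_{\mathfrak g_1}(V,M)\to M$ for an irreducible $\mathfrak g_1$-submodule $V\subseteq M$; that argument avoids Verma modules entirely and does not need integrality of the weights until the very end. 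Your version trades that generality for explicit $sl_2$ computations, which is perfectly in the spirit of this paper.

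The one step that needs tightening is your irreducibility argument for $V(n_1)\otimes V(n_2)\otimes V(n_3)$. You write that since $(n_1,n_2,n_3)$ is the unique lexicographically maximal weight of the tensor product and its weight space is one-dimensional, ``any nonzero submodule contains this vector.'' That inference is not valid as stated: a nonzero submodule $N$ need not meet the globally maximal weight space a priori (proper submodules of reducible modules routinely miss the top weight). The correct argument is: take a nonzero weight vector $w\in N$ whose weight is lexicographically maximal \emph{among the weights of $N$}; then $E_iw=0$ for all $i$; and the joint kernel $\bigcap_{i}\ker E_i$ inside $V(n_1)\otimes V(n_2)\otimes V(n_3)$ equals $\Bbb Cv_{n_1}\otimes\Bbb Cv_{n_2}\otimes\Bbb Cv_{n_3}$, because $E_i$ acts only on the $i^{th}$ factor and $\ker\bigl(E|_{V(n_i)}\bigr)=\Bbb Cv_{n_i}$. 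Hence $w$ is a nonzero multiple of the top vector, and your concluding step (applying the monomials $F_1^{i_1}F_2^{i_2}F_3^{i_3}$ to obtain a full weight basis) finishes the proof. With that one-line repair the argument is complete; the well-definedness of $\varphi$ via the PBW isomorphism $U(sl_2(\Bbb C)^{\oplus 3})\cong U(sl_2(\Bbb C))^{\otimes 3}$ and the presentation of each $V(n_i)$ as $M(n_i)$ modulo the singular vector $F_i^{n_i+1}v_{n_i}$ is sound.
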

It follows immediately from this lemma that
\[S^m(\Bbb C^2\otimes\Bbb C^2\otimes\Bbb C^2)=\bigoplus_{n_1,n_2,n_3\in\Bbb Z^{+}}V(n_1)\otimes V(n_2)\otimes V(n_3)^{\oplus x_{n_1,n_2,n_3}^m}.\]
More generally, instead of $x_{n_1,n_2,n_3}^m$, we will determine the multiplicity of $V(n_1)\otimes V(n_2)\otimes V(n_3)$ in any finite dimensional $sl_2(\Bbb C)^{\oplus 3}$-module $M$. In order to do that, one needs to study the character of a $\mathfrak g$-module.
\begin{definition}
The character of a $\mathfrak g$-module $M=\bigoplus_{\lambda\in\mathfrak h^{\ast}}M_{\lambda}$ is the formal (possibly infinite) sum
\[ch(M)\triangleq\sum_{\lambda\in\mathfrak h^{\ast}}dim(M_{\lambda})e^{\lambda}.\]
\end{definition}
For instance, the character of the $sl_2(\Bbb C)$-module $V(n)$ can be written as
\[ch(V(n))=e^{n}+e^{n-2}+\cdots + e^{2-n}+e^{-n}=\sum_{i=0}^{n}e^{n-2i}.\]
It is clear that the character of the irreducible $sl_2(\Bbb C)^{\oplus 3}$-module $V(n_1)\otimes V(n_2)\otimes V(n_3)$
is
\[ch(V(n_1)\otimes V(n_2)\otimes V(n_3))=\sum_{1\leqslant i_j\leqslant n_j}e^{(n_1-2i_1,n_2-2i_2,n_3-2i_3)}.\]
Note that every weight space in $V(n_1)\otimes V(n_2)\otimes V(n_3)$ is 1-dimensional. Since $ch(M_1\oplus M_2)=ch(M_1)+ch(M_2)$, to decompose a finite dimensional $sl_2(\Bbb C)^{\oplus 3}$-module $M$ into irreducible submodules, one only needs to write the character of $M$ as the sum of $ch(V(n_1)\otimes V(n_2)\otimes V(n_3))$'s. Indeed, for any finite dimensional $sl_{2}(\Bbb C)^{\oplus 3}$-module $M$, the decomposition of $M$ proceeds as follows.

We define a partial order $\preccurlyeq $ on the weight set of $M$ as
\[(m_1, m_2, m_3)\preccurlyeq (l_1, l_2, l_3)\ \textrm{if}\ m_i\leqslant l_i\ \textrm{and}\ 2\mid (l_i-m_i)\ \textrm{for all}\ i=1, 2, 3.\]
The formal sum $\sum_{j_1,j_2,j_3\in\Bbb C}n_{j_1,j_2,j_3}e^{(j_1, j_2, j_3)}$ is defined to act on $(i_1, i_2, i_3)\in\Bbb C^3$ as
\[(\sum_{j_1,j_2,j_3\in\Bbb C}n_{j_1,j_2,j_3}e^{(j_1, j_2, j_3)})((i_1, i_2, i_3))=n_{i_1, i_2, i_3}\in \Bbb Z^+.\]
First take the formal sum $f_1=ch(M)$. Choose a maximal weight $(n_{11}, n_{12}, n_{13})$ with respect to $\preccurlyeq $ among all $(m_1, m_2, m_3)$'s such that $f_1((m_1, m_2, m_3))\neq 0$. Note that $n_{1i}\in\Bbb Z^+$. Next, set
$f_2=ch(M)-ch(V(n_{11})\otimes V(n_{12})\otimes V(n_{13}))$ and take a maximal weight $(n_{21}, n_{22}, n_{23})$ among all $(l_1, l_2, l_3)$'s such that $f_2((l_1, l_2, l_3))\neq 0$. Then we define \[f_3=ch(M)-\sum_{j=1}^{2}ch(V(n_{j1})\otimes V(n_{j2})\otimes V(n_{j3}))\]
and similar operation continues until  $f_{s+1}=0$ for some $s\in\Bbb Z^{+}$. Since $ch(M)$ is a finite sum, the algorithm stops after finitely many steps.
Finally one has
\[M=\bigoplus_{i=1}^{s}V(n_{i1})\otimes V(n_{i2})\otimes V(n_{i3}).\]
The following theorem shows another method which is more effective to calculate the multiplicities of the irreducible summands in $M$. See also Theorem 36 in \cite{BBS} for the case $M=S^m(\Bbb C^2\otimes\Bbb C^2\otimes\Bbb C^2)$ and $n_1=n_2=n_3=0$.
\begin{theorem}
For a finite dimensional $sl_2(\Bbb C)^{\oplus 3}$-module $M$, the multiplicity $x_{n_1,n_2,n_3}$ of $V(n_1)\otimes V(n_2)\otimes V(n_3)$ in $M$ is
\begin{equation*}
\begin{split}
x_{n_1,n_2,n_3}=&dimM_{(n_1,n_2,n_3)}-dimM_{(n_1+2,n_2,n_3)}-dimM_{(n_1,n_2+2,n_3)}\\
                &-dimM_{(n_1,n_2,n_3+2)}+dimM_{(n_1+2,n_2+2,n_3)}+dimM_{(n_1+2,n_2,n_3+2)}\\
                &+dimM_{(n_1,n_2+2,n_3+2)}-dimM_{(n_1+2,n_2+2,n_3+2)}.\\
\end{split}
\end{equation*}
\end{theorem}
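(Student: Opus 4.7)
The plan is to use complete reducibility and a factorization argument to compute the alternating sum on the right-hand side directly. Since every finite dimensional $sl_2(\mathbb{C})^{\oplus 3}$-module is completely reducible, I can write
\[M=\bigoplus_{m_1,m_2,m_3\in\mathbb{Z}^+}\bigl(V(m_1)\otimes V(m_2)\otimes V(m_3)\bigr)^{\oplus x_{m_1,m_2,m_3}},\]
so that $\dim M_{(a_1,a_2,a_3)}=\sum_{m_1,m_2,m_3} x_{m_1,m_2,m_3}\,\phi_{m_1}(a_1)\phi_{m_2}(a_2)\phi_{m_3}(a_3)$, where $\phi_m(a)=1$ if $|a|\leqslant m$ and $a\equiv m\pmod{2}$, and $0$ otherwise. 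The key observation, which I would single out as a preliminary lemma, is that the weight multiplicities of an irreducible tensor product factor into a product over the three $sl_2$-components.

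Next, I would substitute this expression into the eight-term alternating sum
\[S=\sum_{\varepsilon\in\{0,1\}^{3}}(-1)^{|\varepsilon|}\dim M_{(n_1+2\varepsilon_1,n_2+2\varepsilon_2,n_3+2\varepsilon_3)}\]
and swap the order of summation. Because the weight multiplicity factors, the inner sum over $\varepsilon$ also factors, giving
\[S=\sum_{m_1,m_2,m_3}x_{m_1,m_2,m_3}\prod_{i=1}^{3}\bigl(\phi_{m_i}(n_i)-\phi_{m_i}(n_i+2)\bigr).\]
Then I would evaluate each factor separately. Since $n_i\geqslant 0$, we have $\phi_{m_i}(n_i)=1$ precisely when $m_i\geqslant n_i$ and $m_i\equiv n_i\pmod 2$, while $\phi_{m_i}(n_i+2)=1$ precisely when $m_i\geqslant n_i+2$ and $m_i\equiv n_i\pmod 2$. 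Hence $\phi_{m_i}(n_i)-\phi_{m_i}(n_i+2)$ equals $1$ if $m_i=n_i$ and $0$ otherwise. Multiplying the three factors forces $(m_1,m_2,m_3)=(n_1,n_2,n_3)$, so only one term survives and $S=x_{n_1,n_2,n_3}$.

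The only step requiring real care is the factorization of the weight multiplicity, which I would justify by recalling that $V(n)$ has $1$-dimensional weight spaces at weights $n,n-2,\ldots,-n$ and nowhere else, so the weight spaces of the triple tensor product are spanned by simple tensors of weight vectors. The rest is bookkeeping: the sign pattern in $(1-T_1)(1-T_2)(1-T_3)$, where $T_i$ denotes the shift $n_i\mapsto n_i+2$, matches the eight-term expression in the statement. I do not anticipate any serious obstacle beyond keeping the inclusion-exclusion telescoping straight.
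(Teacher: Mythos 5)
Your argument is correct and is essentially the paper's own proof: both start from complete reducibility, express each weight-space dimension as the sum of the multiplicities $x_{l_1,l_2,l_3}$ over those irreducibles $V(l_1)\otimes V(l_2)\otimes V(l_3)$ whose weight support contains the given weight (the paper phrases this via the partial order $\preccurlyeq$, you via the product $\phi_{m_1}(a_1)\phi_{m_2}(a_2)\phi_{m_3}(a_3)$), and then invert this relation by inclusion--exclusion over the eight shifted weights. The only difference is organizational: you factor the alternating sum into $\prod_{i=1}^{3}\bigl(\phi_{m_i}(n_i)-\phi_{m_i}(n_i+2)\bigr)$, each factor collapsing to the indicator of $m_i=n_i$, whereas the paper partitions $\dim M_{(n_1,n_2,n_3)}$ into eight sub-sums and checks the cancellation term by term --- your factorization is a slightly cleaner way of doing the same bookkeeping.
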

\begin{proof} We assume that
\[M=\bigoplus_{n_1,n_2,n_3\in\Bbb Z^+}V(n_1)\otimes V(n_2)\otimes V(n_3)^{\oplus x_{n_1,n_2,n_3}}.\]
It is easy to check that for $m_i\in\Bbb Z^+$ ($i=1, 2, 3$),
\[(m_1, m_2, m_3)\preccurlyeq (l_1, l_2, l_3)\ \textrm{iff}\ V(l_1)\otimes V(l_2)\otimes V(l_3)_{(m_1, m_2, m_3)}\neq 0.\] As mentioned above,
$V(l_1)\otimes V(l_2)\otimes V(l_3)_{(m_1, m_2, m_3)}$ is 1-dimensional if it is non-zero.
It follows that
\[dimM_{(m_1,m_2,m_3)}=\sum_{(m_1, m_2, m_3)\preccurlyeq (l_1,l_2,l_3)}x_{l_1, l_2, l_3}\]
 for $m_i\in\Bbb Z^+$ ($i=1, 2, 3$).
 For given $n_i\in\Bbb Z^+$ ($i=1, 2, 3$), we divide the sum $dimM_{(n_1,n_2,n_3)}=\sum_{(n_1, n_2, n_3)\preccurlyeq (l_1,l_2,l_3)}x_{l_1, l_2, l_3}$ into 8 parts as the following.
\begin{align}
dimM_{(n_1,n_2,n_3)}=
&x_{n_1,n_2,n_3}\\
&+\sum_{(n_1, n_2, n_3+2)\preccurlyeq (n_1,n_2,l_3)}x_{n_1, n_2, l_3}\\
&+\sum_{(n_1, n_2+2, n_3)\preccurlyeq (n_1,l_2,n_3)}x_{n_1, l_2, n_3}\\
&+\sum_{(n_1+2, n_2, n_3)\preccurlyeq (l_1,n_2,n_3)}x_{l_1, n_2, n_3}\\
&+\sum_{(n_1, n_2+2, n_3+2)\preccurlyeq (n_1,l_2,l_3)}x_{n_1, l_2, l_3}\\
&+\sum_{(n_1+2, n_2, n_3+2)\preccurlyeq (l_1,n_2,l_3)}x_{l_1, n_2, l_3}\\
&+\sum_{(n_1+2, n_2+2, n_3)\preccurlyeq (l_1,l_2,n_3)}x_{l_1, l_2, n_3}\\
&+\sum_{(n_1+2, n_2+2, n_3+2)\preccurlyeq (l_1,l_2,l_3)}x_{l_1, l_2, l_3}
\end{align}
Check that
\begin{align*}
dimM_{(n_1+2,n_2,n_3)}&= (4)+(6)+(7)+(8),\\
dimM_{(n_1,n_2+2,n_3)}&= (3)+(5)+(7)+(8),\\
dimM_{(n_1,n_2,n_3+2)}&= (2)+(5)+(6)+(8),\\
dimM_{(n_1+2,n_2+2,n_3)}&=  (7)+(8),\\
dimM_{(n_1+2,n_2,n_3+2)}&= (6)+(8),\\
dimM_{(n_1,n_2+2,n_3+2)}&= (5)+(8),\\
dimM_{(n_1+2,n_2+2,n_3+2)}&= (8).\\
\end{align*}
One can complete the proof by a direct calculation.
\end{proof}
Applying this theorem, we get the multiplicity of of $V(n_1)\otimes V(n_2)\otimes V(n_3)$ in $S^m(\Bbb C^2\otimes\Bbb C^2\otimes\Bbb C^2)$ as the following
\begin{equation*}
\begin{split}
x_{n_1,n_2,n_3}^{m}=&dimV^m_{(n_1,n_2,n_3)}-dimV^m_{(n_1+2,n_2,n_3)}-dimV^m_{(n_1,n_2+2,n_3)}\\
                    &-dimV^m_{(n_1,n_2,n_3+2)}+dimV^m_{(n_1+2,n_2+2,n_3)}+dimV^m_{(n_1+2,n_2,n_3+2)}\\
                    &+dimV^m_{(n_1,n_2+2,n_3+2)}-dimV^m_{(n_1+2,n_2+2,n_3+2)},\\
\end{split}
\end{equation*}
where $V^{m}=S^m(\Bbb C^2\otimes\Bbb C^2\otimes\Bbb C^2)$.

\section{Dimension formulas}

Fix $m\in\Bbb Z^+$. To obtain the multiplicity of $V(n_1)\otimes V(n_2)\otimes V(n_3)$  in $S^m(\Bbb C^2\otimes\Bbb C^2\otimes\Bbb C^2)$, it only requires us to compute the dimensions of weight spaces in $S^m(\Bbb C^2\otimes\Bbb C^2\otimes\Bbb C^2)$. In this section, the dimension formulas for the weight spaces will be
given.

Recall that all the ordered monomials of the form
\begin{equation}
x_{0,0,0}^{a_{0,0,0}}x_{0,0,1}^{a_{0,0,1}}x_{0,1,0}^{a_{0,1,0}}x_{0,1,1}^{a_{0,1,1}}x_{1,0,0}^{a_{1,0,0}}
x_{1,0,1}^{a_{1,0,1}}x_{1,1,0}^{a_{1,1,0}}x_{1,1,1}^{a_{1,1,1}},
\end{equation}
where
\begin{equation}
\sum_{i,j,l\in\{0,1\}}a_{i,j,l}=m
\end{equation}
and $a_{i,j,l}\in\Bbb Z^+$, form a basis for $S^m(\Bbb C^2\otimes\Bbb C^2\otimes\Bbb C^2)$.
One can check that the weight of (9) is
\[(m-2k, m-2r, m-2n),\]
where
\begin{align}
k&=a_{1,0,0}+a_{1,1,0}+a_{1,0,1}+a_{1,1,1},\\
r&=a_{0,1,0}+a_{0,1,1}+a_{1,1,0}+a_{1,1,1},\\
n&=a_{0,0,1}+a_{0,1,1}+a_{1,0,1}+a_{1,1,1}.
\end{align}
For fixed $k, r, n\in\Bbb Z^+$, the dimension of \[S^m(\Bbb C^2\otimes\Bbb C^2\otimes\Bbb C^2)_{(m-2k, m-2r, m-2n)},\]
denoted by $C^{m}_{k,r,n}$,
equals the number of choices of $a_{i,j,l}\in\Bbb Z^+$ satisfying (10), (11), (12), (13). Since obviously
\[C^{m}_{k_1,k_2,k_3}=C^{m}_{k_{\sigma(1)},k_{\sigma(2)},k_{\sigma(3)}}\]
for any permutation $\sigma$ of $\{1, 2, 3\}$, we assume that
\begin{equation}
k\geqslant r\geqslant n\geqslant 0.
\end{equation}
It can be easily seen from the structure of $V(n_1)\otimes V(n_2)\otimes V(n_3)$ that
\[dimV(n_1)\otimes V(n_2)\otimes V(n_3)_{(k_1,k_2,k_3)}=dimV(n_1)\otimes V(n_2)\otimes V(n_3)_{(|k_1|,|k_2|,|k_3|)},\]
where $|k_i|$ is the absolute value of $k_i\in\Bbb Z$. Hence for any finite dimensional $sl_2(\Bbb C)^{\oplus 3}$-module $M$, in particular, for
$M=S^m(\Bbb C^2\otimes\Bbb C^2\otimes\Bbb C^2)$, one has
\[dimM_{(k_1,k_2,k_3)}=dimM_{(|k_1|,|k_2|,|k_3|)}.\]
This allows us to furtherly assume that
\begin{equation}
m-2k\geqslant 0,\ m-2r\geqslant 0,\ m-2n\geqslant 0.
\end{equation}
Combining (14), we have the following assumption
\begin{equation}
\frac{m}{2}\geqslant k\geqslant r\geqslant n\geqslant 0.
\end{equation}
For $r_i\in\Bbb Z^+$ with $r_1\geqslant r_2$, $r_1\geqslant r_3$, we define $C^{r_1}_{r_2, r_3}$ to be the number of all $2\times 2$ matrices
\begin{gather*}
\begin{pmatrix} a_{11} & a_{12} \\ a_{21} & a_{22}\end{pmatrix}
\end{gather*}
such that
\[a_{11}+a_{12}+a_{21}+a_{22}=r_1,\]
\[a_{21}+a_{22}=r_2,\ a_{12}+a_{22}=r_3,\ a_{ij}\in\Bbb Z^+.\]
It is proved in \cite{BBS} that
\begin{equation}
C^{r_1}_{r_2, r_3}=min\{r_2, r_3, r_1-r_2, r_2-r_3\}+1.
\end{equation}
Set $a=a_{0,1,0}+a_{0,1,1}$ and $b=a_{0,0,1}+a_{0,1,1}$. See that
\[m-k\geqslant a\geqslant 0,\ \ m-k\geqslant b\geqslant 0,\]
\[k\geqslant r-a\geqslant 0,\ \ k\geqslant n-b\geqslant 0.\]
    Let $S$ be the set of all pairs of $2\times 2$ matrices $(A_0, A_1)$, where
\begin{gather*}
A_1=\begin{pmatrix} a_{0,0,0} & a_{0,0,1} \\ a_{0,1,0} & a_{0,1,1}\end{pmatrix},\quad A_2=
\begin{pmatrix} a_{1,0,0} & a_{1,0,1} \\ a_{1,1,0} & a_{1,1,1}\end{pmatrix}
\end{gather*}
satisfy
\begin{align*}
a_{0,0,0}+a_{0,0,1}+a_{0,1,0}+a_{0,1,1}&=m-k,\\
a_{0,1,0}+a_{0,1,1}&=a,\\
a_{0,0,1}+a_{0,1,1}&=b,\\
a_{1,0,0}+a_{1,0,1}+a_{1,1,0}+a_{1,1,1}&=k,\\
a_{0,1,0}+a_{0,1,1}&=r-a,\\
a_{0,0,1}+a_{0,1,1}&=n-b,\\
0\leqslant a\leqslant r,\ \ 0\leqslant b\leqslant n,\ \ a_{i,j,l}&\in\Bbb Z^+.
\end{align*}
Note that $S$ provides all choices for $a_{i,j,l}\in\Bbb Z^+$ which satisfy (10), (11), (12), (13). It follows that
\begin{equation}
C^{m}_{k,r,n}=\sum_{0\leqslant a\leqslant r,0\leqslant b\leqslant n}C^{m-k}_{a,b}C^{k}_{r-a,n-b}.
\end{equation}
Analyzing over one hundred cases and combining some of them, we obtain 6 different expressions of the formula for $C^{m}_{k,r,n}$ which are listed
in the following proposition.

\begin{proposition}
For $m, k, r, n\in Z^+$ with $\frac{m}{2}\geqslant k\geqslant r\geqslant n\geqslant 0$,
\begin{itemize}
\item[(I)] When $r+n\leqslant k$,
    \begin{equation*}
     C^{m}_{k,r,n}=1+\frac{4 n}{3}+\frac{n^2}{12}-\frac{n^3}{3}-\frac{n^4}{12}+r+\frac{11 n r}{6}+n^2 r+\frac{n^3 r}{6};
    \end{equation*}
\item[(II)] When $k<r+n<m-k$,
     \begin{itemize}
        \item[(II.1)] if $2\mid (r+n-k)$,
            \begin{align*}
                 C^{m}_{k,r,n}=&1+\frac{k}{3}-\frac{5 k^2}{12}+\frac{k^3}{6}-\frac{k^4}{48}+n+\frac{5 k n}{6}-\frac{k^2 n}{2}+\frac{k^3 n}{12}\ \ \ \ \ \ \ \ \ \ \ \\
                 &-\frac{n^2}{3}+\frac{k n^2}{2}-\frac{k^2 n^2}{8}-\frac{n^3}{2}+\frac{k n^3}{12}-\frac{5 n^4}{48}+\frac{2 r}{3}\\
                 &+\frac{5 k r}{6}-\frac{k^2 r}{2}+\frac{k^3 r}{12}+n r+k n r-\frac{1}{4} k^2 n r+\frac{n^2 r}{2}\\
                 &+\frac{1}{4} k n^2 r+\frac{n^3 r}{12}-\frac{5 r^2}{12}+\frac{k r^2}{2}-\frac{k^2 r^2}{8}-\frac{n r^2}{2}\\
                 &+\frac{1}{4} k n r^2-\frac{n^2 r^2}{8}-\frac{r^3}{6}+\frac{k r^3}{12}-\frac{n r^3}{12}-\frac{r^4}{48};
            \end{align*}
        \item[(II.2)] if $2\nmid (r+n-k)$,
           \begin{align*}
                 C^{m}_{k,r,n}=&\frac{15}{16}+\frac{k}{3}-\frac{5 k^2}{12}+\frac{k^3}{6}-\frac{k^4}{48}+n+\frac{5 k n}{6}-\frac{k^2 n}{2}+\frac{k^3 n}{12}\ \  \ \ \ \ \ \ \\
                 &-\frac{n^2}{3}+\frac{k n^2}{2}-
                 \frac{k^2 n^2}{8}-\frac{n^3}{2}+\frac{k n^3}{12}-\frac{5 n^4}{48}+\frac{2 r}{3}\\
                 &+\frac{5 k r}{6}-\frac{k^2 r}{2}+\frac{k^3 r}{12}+n r+k n r-
                 \frac{1}{4} k^2 n r+\frac{n^2 r}{2}\\
                 &+\frac{1}{4} k n^2 r+\frac{n^3 r}{12}-\frac{5 r^2}{12}+\frac{k r^2}{2}-\frac{k^2 r^2}{8}-\frac{n r^2}{2}\\
                 &+\frac{1}{4} k n r^2-\frac{n^2 r^2}{8}-\frac{r^3}{6}+\frac{k r^3}{12}-\frac{n r^3}{12}-\frac{r^4}{48};
           \end{align*}
    \end{itemize}
\item[(III)] When $m-k\leqslant r+n$,
            \begin{itemize}
               \item[(III.1)] if $2\mid m$, $2\mid(r+n-k)$,
                       \begin{align*}
                          C^{m}_{k,r,n}=&1-\frac{5 k^2}{6}-\frac{k^4}{24}+\frac{m}{3}+\frac{5 k m}{6}+\frac{k^2 m}{2}+\frac{k^3 m}{12}-\frac{5 m^2}{12}\ \ \ \ \ \ \ \ \  \\
                          &-\frac{k m^2}{2}-\frac{k^2 m^2}{8}+
                          \frac{m^3}{6}+\frac{k m^3}{12}-\frac{m^4}{48}+\frac{2 n}{3}-k^2 n\\
                          &+\frac{5 m n}{6}+k m n+\frac{1}{4} k^2 m n-\frac{m^2 n}{2}-\frac{1}{4} k m^2 n+
                          \frac{m^3 n}{12}\\
                          &-\frac{3 n^2}{4}-\frac{k^2 n^2}{4}+\frac{m n^2}{2}+\frac{1}{4} k m n^2-\frac{m^2 n^2}{8}-\frac{2 n^3}{3}\\
                          &+\frac{m n^3}{12}-\frac{n^4}{8}+\frac{r}{3}-
                          k^2 r+\frac{5 m r}{6}+k m r+\frac{1}{4} k^2 m r\\
                          &-\frac{m^2 r}{2}-\frac{1}{4} k m^2 r+\frac{m^3 r}{12}+\frac{n r}{6}-\frac{1}{2} k^2 n r+
                          m n r\\
                          &+\frac{1}{2} k m n r-\frac{1}{4} m^2 n r+\frac{1}{4} m n^2 r-\frac{5 r^2}{6}-\frac{k^2 r^2}{4}+\frac{m r^2}{2}\\
                          &+\frac{1}{4} k m r^2-
                          \frac{m^2 r^2}{8}-n r^2+\frac{1}{4} m n r^2-\frac{n^2 r^2}{4}-\frac{r^3}{3}\\
                          &+\frac{m r^3}{12}-\frac{n r^3}{6}-\frac{r^4}{24};
                      \end{align*}
               \item[(III.2)] if $2\mid m$, $2\nmid(r+n-k)$,
                        \begin{align*}
                           C^{m}_{k,r,n}=&\frac{7}{8}-\frac{5 k^2}{6}-\frac{k^4}{24}+\frac{m}{3}+\frac{5 k m}{6}+\frac{k^2 m}{2}+\frac{k^3 m}{12}-\frac{5 m^2}{12}\ \ \ \ \ \ \ \ \\
                           &-\frac{k m^2}{2}-\frac{k^2 m^2}{8}+\frac{m^3}{6}+\frac{k m^3}{12}-\frac{m^4}{48}+\frac{2 n}{3}-k^2 n\\
                           &+\frac{5 m n}{6}+k m n+\frac{1}{4} k^2 m n-\frac{m^2 n}{2}-\frac{1}{4} k m^2 n+\frac{m^3 n}{12}\\
                           &-\frac{3 n^2}{4}-\frac{k^2 n^2}{4}+\frac{m n^2}{2}+\frac{1}{4} k m n^2-\frac{m^2 n^2}{8}-\frac{2 n^3}{3}\\
                           &+\frac{m n^3}{12}-\frac{n^4}{8}+\frac{r}{3}-k^2 r+\frac{5 m r}{6}+k m r+\frac{1}{4} k^2 m r\\
                           &-\frac{m^2 r}{2}-\frac{1}{4} k m^2 r+\frac{m^3 r}{12}+\frac{n r}{6}-\frac{1}{2} k^2 n r+m n r\\
                           &+\frac{1}{2} k m n r-\frac{1}{4} m^2 n r+\frac{1}{4} m n^2 r-\frac{5 r^2}{6}-\frac{k^2 r^2}{4}+\frac{m r^2}{2}\\
                           &+\frac{1}{4} k m r^2-\frac{m^2 r^2}{8}-n r^2+\frac{1}{4} m n r^2-\frac{n^2 r^2}{4}-\frac{r^3}{3}\\
                           &+\frac{m r^3}{12}-\frac{n r^3}{6}-\frac{r^4}{24};
                       \end{align*}
               \item[(III.3)] if $2\nmid m$,
                        \begin{align*}
                           C^{m}_{k,r,n}=&\frac{15}{16}-\frac{5 k^2}{6}-\frac{k^4}{24}+\frac{m}{3}+\frac{5 k m}{6}+\frac{k^2 m}{2}+\frac{k^3 m}{12}-\frac{5 m^2}{12}\ \ \ \ \ \ \ \ \\
                           &-\frac{k m^2}{2}-\frac{k^2 m^2}{8}+\frac{m^3}{6}+\frac{k m^3}{12}-\frac{m^4}{48}+\frac{2 n}{3}-k^2 n\\
                           &+\frac{5 m n}{6}+k m n+\frac{1}{4} k^2 m n-\frac{m^2 n}{2}-\frac{1}{4} k m^2 n+\frac{m^3 n}{12}\\
                           &-\frac{3 n^2}{4}-\frac{k^2 n^2}{4}+\frac{m n^2}{2}+\frac{1}{4} k m n^2-\frac{m^2 n^2}{8}-\frac{2 n^3}{3}\\
                           &+\frac{m n^3}{12}-\frac{n^4}{8}+\frac{r}{3}-k^2 r+\frac{5 m r}{6}+k m r+\frac{1}{4} k^2 m r\\
                           &-\frac{m^2 r}{2}-\frac{1}{4} k m^2 r+\frac{m^3 r}{12}+\frac{n r}{6}-\frac{1}{2} k^2 n r+m n r\\
                           &+\frac{1}{2} k m n r-\frac{1}{4} m^2 n r+\frac{1}{4} m n^2 r-\frac{5 r^2}{6}-\frac{k^2 r^2}{4}+\frac{m r^2}{2}\\
                           &+\frac{1}{4} k m r^2-\frac{m^2 r^2}{8}-n r^2+\frac{1}{4} m n r^2-\frac{n^2 r^2}{4}-\frac{r^3}{3}\\
                           &+\frac{m r^3}{12}-\frac{n r^3}{6}-\frac{r^4}{24};
                        \end{align*}
            \end{itemize}
\end{itemize}
\end{proposition}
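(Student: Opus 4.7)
The plan is to evaluate the double sum (18) by substituting the closed formula (17) for $C^{r_1}_{r_2,r_3}$ and then summing the resulting piecewise-polynomial function of $(a,b)$ over the rectangle $[0,r]\times[0,n]\cap\mathbb Z^2$. Each of the two factors in the summand is the translation by $+1$ of a minimum of four linear forms, so the rectangle splits into at most sixteen lattice subregions, on each of which the summand is a polynomial in $(a,b)$ of degree at most two.

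First I would determine which of these subregions are non-empty. Under the standing hypothesis $\tfrac{m}{2}\ge k\ge r\ge n\ge 0$ many of them collapse, and the pattern of surviving subregions depends on how $r+n$ compares to $k$ and to $m-k$; this is precisely what produces the three main cases (I) $r+n\le k$, (II) $k<r+n<m-k$ and (III) $m-k\le r+n$. In case (I) the factor $C^{k}_{r-a,n-b}$ simplifies because one of the four arguments of its minimum is uniformly dominated on the whole rectangle, and symmetrically for the other factor in case (III); case (II) is the generic one with the largest number of surviving subregions.

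Next, on each surviving subregion, which is a rectangle or a triangle with integer vertices, I would compute the double sum using the standard closed forms for $\sum i$, $\sum i^2$ and $\sum ij$. Summing first over $b$ and then over $a$ yields, on each subregion, a quartic polynomial in the parameters $m,k,r,n$. The parity subcases (II.1)/(II.2) and (III.1)/(III.2)/(III.3) emerge at this point: the boundaries between adjacent subregions have the form $a+b=\tfrac{r+n-k}{2}$ or $a-b=\tfrac{m-2k}{2}$, and the exact count of lattice points on such a line depends on the integrality of the right-hand side. The resulting $\tfrac{1}{16}$ and $\tfrac{1}{8}$ corrections account for the differences between the constant terms $1$, $\tfrac{15}{16}$, $\tfrac{7}{8}$ that distinguish the six formulas.

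Finally I would add the contributions from the non-empty subregions and simplify to obtain the six stated expressions. The main obstacle is bookkeeping: as the paper notes after equation (18), well over a hundred elementary subcases arise in the partition of the rectangle, and the six clean formulas emerge only after patient combination. To keep the argument tractable I would bundle subregions giving identical polynomial contributions, use the hypothesis $\tfrac{m}{2}\ge k\ge r\ge n\ge 0$ to discard empty ones, and verify the final quartic identities against several small numerical values of $C^m_{k,r,n}$ computed directly from the monomial definition (9)--(13).
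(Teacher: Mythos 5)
Your proposal is correct and follows essentially the same route as the paper: both substitute the closed form $C^{r_1}_{r_2,r_3}=\min\{r_2,r_3,r_1-r_2,r_1-r_3\}+1$ into the convolution sum (18), partition the lattice rectangle $0\leqslant a\leqslant r$, $0\leqslant b\leqslant n$ into pieces on which the summand is a polynomial of degree at most two, and evaluate with standard power-sum identities, with the case distinctions (I)--(III) coming from how $r+n$ compares with $k$ and $m-k$ and the parity corrections coming from lattice points at the corners of the pieces. The paper merely organizes the case (I) computation as a sum of inner products of columns of two $(r+1)\times(n+1)$ matrices, which is a presentational variant of the same calculation.
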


\begin{proof} We only show $(I)$, the simplest case, and the others can be similarly checked.
For $m, k, r, n, a, b\in\Bbb Z^+$ with $n\leqslant r\leqslant k\leqslant \frac{m}{2}$, $0\leqslant a\leqslant r$ and $0\leqslant b\leqslant n$, it follows from (17) that
\begin{align}
C^{m-k}_{a,b}&=
\begin{cases}
a+1& \text{if $m-k\geqslant a+b$ and $a\leqslant b$},\\
b+1& \text{if $m-k\geqslant a+b$ and $a> b$},\\
m-k-b+1& \text{if $m-k< a+b$ and $a\leqslant b$},\\
m-k-a+1& \text{if $m-k< a+b$ and $a> b$},
\end{cases}\\
C^{k}_{r-a,n-b}&=
\begin{cases}
k-n+b+1& \text{if $r+n-k\geqslant a+b$ and $r-a\leqslant n-b$},\\
k-r+a+1& \text{if $r+n-k\geqslant a+b$ and $r-a> n-b$},\\
r-a+1& \text{if $r+n-k< a+b$ and $r-a\leqslant n-b$},\\
n-b+1& \text{if $r+n-k< a+b$ and $r-a>n-b$}.
\end{cases}
\end{align}
If $r+n\leqslant k$, one has $r+n-k\leqslant 0\leqslant a+b$ . Moreover, the assumption $\frac{m}{2}\geqslant k$ implies
$m-k-r-n=m-2k-(r+n-k)\geqslant 0$.
Hence $m-k\geqslant r+n\geqslant a+b$. We have, in case $(I)$,
\begin{align}
C^{m-k}_{a,b}&=
\begin{cases}
a+1& \text{if $a\leqslant b$},\\
b+1& \text{if $a> b$},
\end{cases}\\
C^{k}_{r-a,n-b}&=
\begin{cases}
r-a+1& \text{if $r-a\leqslant n-b$},\\
n-b+1& \text{if $r-a>n-b$}.
\end{cases}
\end{align}
Let $C=(c_{ij})$ and $D=(d_{ij})$ be $(r+1)\times (n+1)$ matrices such that
\[c_{ij}=C^{m-k}_{i-1, j-1},\quad d_{ij}=C^{k}_{r+1-i,n+1-j}.\]
One can see that
\begin{equation*}
C^{m}_{k,r,n}=\sum_{1\leqslant i\leqslant r+1,1\leqslant j\leqslant n+1}c_{ij}d_{ij}=\sum_{1\leqslant j\leqslant n+1}C_j\cdot D_j,
\end{equation*}
where $C_j$ and $D_j$ are the the $j^{th}$-column of $C$ and $D$ respectively, and $C_j\cdot D_j$ means the the standard inner product of $C_j$ and $D_j$.

By (21) and (22), we have
\begin{equation*}
\begin{split}
C_j=&(1,2,\cdots, j-1, \underbrace{j,\cdots, j}_{r+2-j})^{\textrm{T}},\\
D_j=&(\underbrace{n+2-j,n+2-j,\cdots, n+2-j}_{r-n+j}, n+1-j,n-j,\cdots, 1)^{\textrm{T}}.
\end{split}
\end{equation*}
For fixed $j\in\{1,2,\cdots, n+1\}$,
\begin{equation*}
\begin{split}
C_j\cdot D_j=&(n+2-j)(1+2+\cdots+(j-1))+j(1+2+\cdots+(n-j+1))\\
             &+j(n-j+2)(r+1-n)\\
            =&(2 -  \frac{ n^2}{2}+ 2  r+ n r)j + (\frac{ n}{2}-1-r)j^2.
\end{split}
\end{equation*}
It follows that
\begin{equation*}
\begin{split}
C^{m}_{k,r,n}&=(2 -  \frac{ n^2}{2}+ 2  r+ n r)\sum_{j=1}^{n+1}j + (\frac{ n}{2}-1-r)\sum_{j=1}^{n+1}j^2\\
             &=1+\frac{4 n}{3}+\frac{n^2}{12}-\frac{n^3}{3}-\frac{n^4}{12}+r+\frac{11 n r}{6}+n^2 r+\frac{n^3 r}{6}.
\end{split}
\end{equation*}
\end{proof}

It is not difficult to check that our formulas are exactly the same as those in Theorem 31 in \cite{BBS} for specially chosen $k$, $r$ and $n$.

\begin{example}
We compute the multiplicity $x^{40}_{4,8,8}$ of $V(4)\otimes V(8)\otimes V(8)$ in $V^{40}=S^{40}(\Bbb C^2\otimes\Bbb C^{2}\otimes\Bbb C^{2})$. From Theorem 3.3, one has
\begin{equation*}
\begin{split}
x_{4,8,8}^{40}=&dimV^{40}_{(4,8,8)}-dimV^{40}_{(6,8,8)}-dimV^{40}_{(4,10,8)}-dimV^{40}_{(4,8,10)}+dimV^{40}_{(6,10,8)}\\
                    &+dimV^{40}_{(6,8,10)}+dimV^{40}_{(4,10,10)}-dimV^{40}_{(6,10,10)}\\
               =&C^{40}_{18,16,16}-C^{40}_{17,16,16}-2C^{40}_{18,16,15}+2C^{40}_{17,16,15}+C^{40}_{18,15,15}-C^{40}_{17,15,15}.
\end{split}
\end{equation*}
See that the condition in Proposition 4.1. (III.1) holds when $k=18$, $r=16$ and $n=16$. Hence applying the formula, we have $C^{40}_{18,16,16}=6957$ . Similarly check that
\begin{equation*}
\begin{split}
C^{40}_{17,16,16}&=6710,\quad C^{40}_{18,16,15}=6421,\quad C^{40}_{17,16,15}=6208,\\
C^{40}_{18,15,15}&=5952,\quad C^{40}_{17,15,15}=5770.
\end{split}
\end{equation*}
Hence $x_{4,8,8}^{40}=3$.
\end{example}

\end{document}